\newcommand{\Tr}{{\mathop{\mathrm{Tr} \,}}}
\DeclareMathOperator{\supp}{supp}
\DeclareMathOperator{\dist}{dist}
\newcommand{\tg}{\tilde{\gamma}}
\newcommand{\hg}{\hat{\gamma}}
\newcommand{\euler}{\mathrm{e}}
\newcommand{\hypU}{$(\mathbf{U})$\xspace}
\newcommand{\hypPi}{$(\mathbf{\Pi})$\xspace}
\newcommand{\hypNoPi}{$(\mathbf{No-\Pi})$\xspace}
\newcommand{\hypO}{$(\mathbf{O})$\xspace}
\newcommand{\ml}{m_{-}}
\newcommand{\mup}{m_{+}}
\newcommand{\vol}{\operatorname{vol}}
     \newcommand{\EE}{\mathbb{E}}
     \newcommand{\NN}{\mathbb{N}}
     \newcommand{\PP}{\mathbb{P}}
     \newcommand{\RR}{\mathbb{R}}
     \newcommand{\ZZ}{\mathbb{Z}}
     \newcommand{\cB}{\mathcal{B}}
\newtheorem{thm}{Theorem}
\newtheorem{cor}[thm]{Corollary}
\theoremstyle{definition}
\newtheorem{dfn}[thm]{Definition}
\theoremstyle{remark}
\newtheorem{rem}[thm]{Remark}
\begin{document}


\title[Wegner estimate with minimal support assumptions]{Wegner estimate and localisation for alloy type operators with minimal support assumptions on the single site potential}

\author[M.~T\"aufer]{Matthias T\"aufer}
\address{Lehrgebiet Analysis, Fakult\"at f\"ur Mathematik und Informatik, FernUniversit\"at in Hagen, Germany}

\author[I.~Veseli\'c]{Ivan Veseli\'c}
\address{Lehrstuhl IX, Fakult\"at f\"ur Mathematik,\,  TU Dortmund, Germany}
\urladdr{http://www.mathematik.tu-dortmund.de/lsix/}

\keywords{Spectral inequality, Uncertainty principle, Anderson model, Alloy Type model, Schr\"odinger operators, Wegner estimate, Localisation}

\begin{abstract}
We prove a Wegner estimate for alloy type models
merely assuming that the single site potential is lower bounded by a characteristic function of a thick set
(a particular class of sets of positive measure).
The proof exploits on one hand recently proven unique continuation principles or uncertainty relations for linear combinations of eigenfunctions of the Laplacian on cubes and on the other hand the well developed machinery for proving Wegner estimates.

We obtain a Wegner estimate with optimal volume dependence at all energies, and localisation near the minimum of the spectrum, even for some non-stationary random potentials.

We complement the result by showing that a lower bound on the potential by the characteristic function of
a thick set is necessary for a  (translation uniform) Wegner estimate to hold.
Hence, we have identified a sharp condition on the size for the support of random potentials that
is sufficient and necessary for the validity of Wegner estimates.

\end{abstract}

\maketitle


\maketitle

\section{Model and results}
We prove a Wegner estimate for continuum
random Schr\"o\-dinger operators with a very weak assumption on the supports of the single site potentials,
which turns out to be optimal.
The random potential needs not be stationary. Together with an initial scale estimate we conclude localisation for such models.

Until now, a fundamental assumption in this context has been that the potential either satisfies a \emph{covering condition}, see e.g.~\cite{MartinelliH-84,CombesH-94b,Kirsch-96}
or that it is uniformly positive on at least a non-empty \emph{open} set, see e.g.~\cite{CombesHK-03, CombesHK-07, RojasMolinaV-13, Klein-13, NakicTTV-18}.
We only ask the sum of potentials to be positive on a so-called \emph{thick set}:
\begin{dfn}
 \label{def:thick}
Let $\gamma\in (0,1]$, $a=(a_1,\ldots, a_d)\in(0,\infty)^d$, and
set $A_a := [0,a_1]\times \cdots \times[0,a_d]\subset\RR^d$.
A measurable set $S\subset\RR^d$ is called \emph{$(\gamma,a)$-thick} if
\begin{equation}
  \label{eq:definition_thick}
\operatorname{vol} \left( S\cap \left(x+A_a \right)  \right)
\geq
\gamma  \operatorname{vol} \left( A_a \right)
\quad
\text{for all}
\quad
x \in \RR^d
\end{equation}
and simply \emph{thick} if there exist $\gamma\in (0,1]$ and $a\in(0,\infty)^d$ such that~\eqref{eq:definition_thick} holds.
\end{dfn}

Thick sets are a generalization of periodic positive measure sets.
Thickness is the minimal condition on the size of a characteristic function necessary for a uncertainty relation of spectral
projectors to hold, cf.~\cite{EgidiV-20}, reformulating a criterion of \cite{Kovrijkine-00}.

We will study alloy type Hamiltonians on $L^2(\RR^d)$ satisfying the following assumption on the random variables:\\

\begin{enumerate}
 \item[\hypU]
 Let $[\ml, \mup] \subset \RR$ be an interval
 and $\Omega = \bigtimes_{j\in\ZZ^d}  [\ml, \mup]$ a probability space equipped with a product measure $\PP =\bigotimes_{j\in\ZZ^d} \mu_j$.
 Denote by $\pi_j\colon \Omega\to  [\ml, \mup]$ the coordinate projections such that $\{\pi_j\}_{j\in\ZZ^d},$ are a family of independent and uniformly bounded random variables.
 We assume that all $\pi_j$ are non-trivial.
\end{enumerate}
While most of our results would hold also under more general conditions on the random variables $\pi_j$, the spelled out condition is standard in the literature.
In fact, we want to focus our attention on a different building block of the random potential and identify the weakest possible condition on the support of the single site potentials, spelled out
in the following hypothesis:\\

\begin{enumerate}
 \item[\hypPi]
 There are $C_u,R>0$ and $p\in[2,\infty)$ with $p>d/2$ if $d>3$, such that
 for each $j \in \ZZ^d$, there is a measurable function $ u_j\colon \RR^d\to [0,\infty)$, supported in $B_R(j)$, the ball of radius $R$ around $j$, and satisfying
 $\Vert u_j \Vert_{ L^p(B_R(j))}\leq C_u$.
 There exists a thick set $S \subset \RR^d$ such that
 \begin{equation}\label{eq:positivity}
  \sum_{j \in \ZZ^d} u_j \geq \mathbf{1}_S,
 \end{equation}
 $\mathbf{1}_S$ being the characteristic function of $S$.
 \end{enumerate}

We define the random potential $V_\omega$, and the alloy type Hamiltonian $H_\omega$ on $L^2(\RR^d)$ as
 \begin{equation}\label{eq:potential+operator}
V_\omega =\sum_{j \in \ZZ^d} \pi_j(\omega) u_j,
\quad
\text{and}
\quad
H_\omega = - \Delta + V_\omega,
\quad
\omega \in \Omega.
 \end{equation}
The potential $V_\omega$ describes the forces to which an electron in the interior (bulk) of
a solid is exposed due to the atoms placed at lattice points $j\in\ZZ^d$ with modulated interaction parameter $\pi_j$.
The assumptions \hypU and \hypPi imply self-adjointness of $H_\omega$ for \emph{all} $\omega \in \Omega$.
Indeed, there exists a $C_V<\infty$ (independent of $\omega$) such that:
\begin{align*}
 \langle \varphi, V_\omega \varphi\rangle
 \leq
 \lvert \mup \rvert \langle \varphi,\sum_{j \in \ZZ^d}  u_j\varphi\rangle
 &\leq
 \frac{1}{2}\langle \varphi, -\Delta \varphi\rangle  +C_V \Vert \varphi\Vert^2, \\
\Vert V_\omega \varphi\Vert
 &\leq
 \frac{1}{2}\Vert-\Delta \varphi \Vert  +C_V \Vert \varphi\Vert,
\end{align*}
see for instance Theorem XII.96 in \cite{ReedS-78} and its proof.
Consequently by the Kato-Rellich theorem the operator $H_\omega = - \Delta + V_\omega$
is self-adjoint on the domain of $\Delta$ and essentially self-adjoint on $C_c^\infty(\RR^d)$.

Also note that one can without loss of generality put any positive constant $c>0$ in front of $\mathbf{1}_{S}$ in~\eqref{eq:positivity} since this simply amounts to rescaling the distributions $\mu_j$.
In the special case where all $u_j$ are translates of a single function, that is  $u_j(x)=u(x-j)$,
it suffices to find \emph{one} positive measure set $T \subset \Lambda_1(0)$ with $u\geq \mathbf{1}_{T}$
in order to satisfy \eqref{eq:positivity}.
 This $T$ could for instance be a Smith--Volterra--Cantor set, illustrating the novelty of the results in this article,
even in the ergodic setting.
Conversely, if $0 \leq u \in L^p$ is not almost everywhere zero, there exist an $\epsilon>0$ and a positive measure set $T$, such that $u \geq \epsilon \chi_T$,
implying (2) for $u_j(x)=u(x-j)$, up to the universal prefactor $\epsilon$.

In order to state our first result, we define $s \colon [0,\infty) \to[0,1]$
\begin{equation}
\label{definition-s-mu-epsilon}
s(\varepsilon)
=
\sup_{j \in \ZZ^d}
\sup_{E \in \RR}
\mu_j \left( \left[ E - \varepsilon/2, E + \varepsilon/2 \right] \right),
\end{equation}
the uniform modulus of continuity of the family of marginal probability measures $\{\mu_j\}_{j\in \NN}$.
For $L>0$ and $x \in \RR^d$ we denote by $\Lambda_L(x) = x + (- L/2, L/2)^d$ the cube with side length $L$, centered at $x$, and simply write $\Lambda_L = \Lambda_{L}(0)$.
We write $H_{\omega, L, x}$ for the self-adjoint restriction of the Schr\"odinger operator $H_\omega$ to the cube $\Lambda_L(x)$ with periodic, Dirichlet or Neumann boundary conditions.
We also use the notation $\chi_I(A)$ for the spectral projector of a self-adjoint operator $A$ onto the set $I \subset \RR$.

\begin{thm}\label{thm:optWE}
Let $H_\omega$ satisfy \hypU and \hypPi above.
Then, for every $E_0 \in \RR$ there exists $C_W:=C_W(E_0) > 0$, such that,
for all $x \in \RR^d$, all $L \geq \max\{a_1, \ldots, a_d \}$, where $a = (a_1, \dots, a_d) \in (0, \infty)^d$ is from the definition of thickness of $S$,
and
for all intervals $[E - \varepsilon, E + \varepsilon ] \subset (- \infty, E_0]$,
the following Wegner estimate holds
\begin{equation}
\label{eq:WE2}
\EE\{\Tr [ \chi_{[E-\varepsilon,E+\varepsilon]}(H_{\omega, L,x}) ]\}
\le
C_W
s(\varepsilon)
L^d.
\end{equation}
\end{thm}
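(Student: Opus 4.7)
The plan is to combine a scale-free uncertainty principle (spectral inequality) for $H_{\omega,L,x}$ below energy $E_0$ with the standard Combes--Hislop--Klopp spectral averaging machinery.

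\emph{Step 1: scale-free uncertainty principle.} I would first establish, uniformly in $\omega \in \Omega$, $x \in \RR^d$ and $L \geq \max_i a_i$, an operator inequality of the form
\[
 \chi_{(-\infty, E_0]}(H_{\omega, L, x}) \, \mathbf{1}_{S} \, \chi_{(-\infty, E_0]}(H_{\omega, L, x})
 \geq
 C_{\mathrm{sfuc}} \, \chi_{(-\infty, E_0]}(H_{\omega, L, x}),
\]
with constant $C_{\mathrm{sfuc}} = C_{\mathrm{sfuc}}(E_0, \gamma, a) > 0$ independent of $L$, $x$ and $\omega$. This is the ``scale-free unique continuation'' input, obtained by combining the Kovrijkine / Egidi--Veseli\'c uncertainty relation for thick sets \cite{Kovrijkine-00, EgidiV-20} with the finite-cube versions and potential-stability techniques of \cite{NakicTTV-18}. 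The form bounds following \eqref{eq:potential+operator} provide the uniform control on $V_\omega$ needed to absorb the random potential into the effective energy threshold; the assumption $L \geq \max_i a_i$ together with the translation invariance built into Definition~\ref{def:thick} propagates the constant from the thick-set hypothesis to each box $\Lambda_L(x)$ uniformly.

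\emph{Steps 2--3: trace estimate and spectral averaging.} Taking traces and using $\sum_j u_j \geq \mathbf{1}_S$ from \hypPi gives
\[
 \Tr \chi_{[E-\varepsilon, E+\varepsilon]}(H_{\omega, L, x})
 \leq
 C_{\mathrm{sfuc}}^{-1} \sum_{j \in J_L(x)} \Tr \bigl[ u_j \, \chi_{[E-\varepsilon, E+\varepsilon]}(H_{\omega, L, x}) \bigr],
\]
where $J_L(x) = \{ j \in \ZZ^d : B_R(j) \cap \Lambda_L(x) \neq \emptyset \}$ has cardinality $\mathcal{O}(L^d)$. For each fixed $j$ I would decompose $H_{\omega, L, x} = H^{(j)}_{\omega, L, x} + \pi_j(\omega) u_j$, with $H^{(j)}_{\omega, L, x}$ independent of $\pi_j$. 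Fubini together with the Combes--Hislop--Klopp spectral averaging estimate, in the form valid for $L^p$ single-site profiles and arbitrary marginal distributions with modulus of continuity $s$ (cf.~\cite{CombesHK-07, NakicTTV-18}), then bounds $\EE[\Tr(u_j \chi_{[E-\varepsilon, E+\varepsilon]}(H_{\omega, L, x}))]$ by $C' \, s(\varepsilon)$, with $C'$ depending only on $C_u$, $p$, $d$ and $E_0$. Summing over the $\mathcal{O}(L^d)$ indices $j$ yields the claimed $C_W \, s(\varepsilon) \, L^d$.

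The main obstacle is Step~1: transferring the thickness-based Kovrijkine uncertainty principle -- originally a statement on $\RR^d$ for spectral projectors of $-\Delta$ -- to a finite cube $\Lambda_L(x)$ with periodic, Dirichlet or Neumann boundary conditions and to $H_{\omega, L, x}$ rather than the free Laplacian, while keeping the resulting constant $L$-, $x$- and $\omega$-independent. In particular one must accommodate the mismatch between the side length $L$ and the period vector $a$ of the thick set, and simultaneously absorb the $L^p$ single-site profiles into the argument. Once this is in place, Steps~2--3 are a routine but delicate invocation of the $L^p$-version of the spectral averaging lemma.
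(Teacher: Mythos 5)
Your Step~1 contains the decisive gap. The scale-free uncertainty principle you posit,
\[
 \chi_{(-\infty, E_0]}(H_{\omega, L, x}) \, \mathbf{1}_{S} \, \chi_{(-\infty, E_0]}(H_{\omega, L, x})
 \;\geq\;
 C_{\mathrm{sfuc}} \, \chi_{(-\infty, E_0]}(H_{\omega, L, x}),
\]
is precisely what is \emph{not} available when $S$ is merely thick. The Kovrijkine / Egidi--Veseli\'c bound (Theorem~\ref{thm:uncertainty}) is a complex-analytic Logvinenko--Sereda inequality: it works because the range of $\chi_{(-\infty,E_0]}(H_{0,L,x})$ consists of trigonometric polynomials, i.e.\ restrictions of entire functions of controlled exponential type, and thick sets are exactly the sets ``seen'' by such functions. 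Eigenfunctions of $H_{\omega,L,x}=-\Delta+V_\omega$ with $V_\omega\in L^p$ are not of this analytic class, so the Logvinenko--Sereda machinery does not apply to $\chi_{(-\infty,E_0]}(H_{\omega,L,x})$. The ``potential-stability'' you invoke from \cite{NakicTTV-18} is a feature of the Carleman-estimate route, and that route fundamentally requires the observation set to contain a union of open balls; it fails for sets of positive measure without interior. You cannot transplant the potential stability of the Carleman approach onto the thick-set capability of the Fourier-analytic approach: the two are tied to incompatible hypotheses on $S$. Consequently the absorption of $V_\omega$ ``into the effective energy threshold'' is not a technical matter of transferring from $\RR^d$ to a cube, but an obstruction; if Step~1 were provable, a Wegner estimate for thick sets would follow by the standard Klein-type argument and this paper would not have been needed.

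The paper's proof circumvents this by never applying the uncertainty principle to $H_{\omega,L,x}$. Following \cite{CombesHK-07}, the trace is split with spectral projectors of the \emph{free} operator:
\[
\Tr [ \chi_{I}(H_{\omega, L, x}) ]
=
\Tr [ \chi_{I}(H_{\omega, L, x}) \chi_{J}(H_{0, L, x}) ]
+ \Tr [ \chi_{I}(H_{\omega, L, x}) \chi_{J^c}(H_{0, L, x}) ],
\]
with $I=[E-\varepsilon,E+\varepsilon]\subset J\subset(-\infty,E_0+O(1)]$. The $\chi_{J^c}(H_{0,L,x})$ term is bounded via Combes--Thomas estimates and absorbed up to a small multiple into the left-hand side; the $\chi_{J}(H_{0,L,x})$ term is where spectral averaging is performed, and the unique continuation input enters as the quadratic form inequality
$\chi_{J}(H_{0, L, x}) \le C(J,S)\,\chi_{J}(H_{0, L, x})\,\mathbf{1}_S\,\chi_{J}(H_{0, L, x})$
for the free Laplacian only, where Theorem~\ref{thm:uncertainty} is applicable with a thick set. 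This decomposition is not a stylistic alternative to your plan; it is what makes the thick-set hypothesis usable. Your Steps~2--3 (summing over $O(L^d)$ sites, Fubini plus CHK-type spectral averaging for $L^p$ profiles) are the right tools, but they must be applied inside the CHK decomposition rather than downstream of a UCP for $H_{\omega,L,x}$ itself.
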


We next establish that this statement is sharp by showing that a Wegner estimate cannot hold if condition \hypPi is omitted.
Indeed, for the Wegner estimate, it is essential that a level set (to some positive value) of the overall potential contains  a thick set.
The contraposition is the following condition \hypNoPi which is a \emph{non-thick upper bound} on (any level set of) the potential.\\

\begin{enumerate}
 \item[\hypNoPi]
 The sequence $(u_j)_{j \in \NN}\subset L^p(\RR^d)$ of functions $u_j\colon  \RR^d \to [0, \infty)$ entering \eqref{eq:potential+operator} is such that
 \[
 U := \sum_{j \in \ZZ^d} u_j
 \]
 satisfies $\lVert U \rVert_{L^\infty(\RR^d)} \leq C_U$ for some $C_U > 0$, and for no $\kappa > 0$, the set $S_\kappa := \{ x \in \RR^d \colon U(x) \geq \kappa \}$ is thick.
\end{enumerate}

Note that in the ergodic setting $u_j(x)=u(x-j)$, this conditions implies that the function $u$ is zero almost everywhere.
The following theorem shows that under Assumption \hypNoPi  no translation uniform Wegner estimate can hold.
The reason is that in this case there is an increasing sequence of cubes with "stubborn" eigenvalues which are arbitrarily insensitive to the potential:

\begin{thm}
	\label{thm:insensitive}
	Let $H_\omega$ satisfy \hypU and \hypNoPi.
	Denote by $H_{\omega, L, x_j}$ the restriction of $H_\omega$ onto $\Lambda_L(x_j)$ with Dirichlet boundary conditions.
	Then, for every $E \geq 0$ and for all $L \geq 1$ there are infinitely many mutually disjoint cubes $\Lambda_L(x_j)$, $j \in \NN$
    such that for all configurations $\omega$
	\begin{equation}\label{eq:stable-eigenvalue}
		\sigma( H_{\omega, L, x_j})
		\cap [E - \varepsilon, E + \varepsilon]
		\neq
		\emptyset
	\quad
    \text{ with } \varepsilon=12\pi \sqrt{E+1}/L.
	\end{equation}
\end{thm}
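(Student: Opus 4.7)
The plan is a quasi-mode (Weyl sequence) argument: in each of infinitely many disjoint cubes $\Lambda_L(y_j)$, I will construct a trial function $\psi_j$ in the Dirichlet form domain of $H_{\omega, L, y_j}$ satisfying $\Vert (H_{\omega, L, y_j} - E)\psi_j \Vert \leq \varepsilon \Vert \psi_j \Vert$ uniformly in $\omega$. By the spectral theorem this forces some eigenvalue into $[E-\varepsilon, E+\varepsilon]$.

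\emph{Extraction of cubes.} I first establish a short geometric lemma: if $S \subset \RR^d$ is measurable and not thick, then for every $L > 0$ and $\gamma > 0$ the set $B_{L,\gamma}(S) := \{y \in \RR^d : \vol(S \cap \Lambda_L(y)) < \gamma L^d\}$ is unbounded. Indeed, were it contained in a ball $\Lambda_R(0)$, then tiling any large box $x+[0,kL]^d$ by $L$-cubes shows that the majority (for $k$ large) contribute $\geq \gamma L^d$, giving $\vol(S \cap (x+[0,kL]^d)) \geq (\gamma/2)(kL)^d$ uniformly in $x$, so $S$ would be thick — contradiction. Now fix $E \geq 0$, $L \geq 1$ and small parameters $\kappa, \gamma > 0$ (specified below). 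By \hypNoPi{} the set $S_\kappa = \{U \geq \kappa\}$ is not thick; by the lemma, $B_{L,\gamma}(S_\kappa)$ is unbounded. Greedily extract $y_1, y_2, \ldots$ with $\Vert y_i - y_j \Vert_\infty \geq 2L$ and $\vol(S_\kappa \cap \Lambda_L(y_j)) < \gamma L^d$ for all $j$, so the cubes $\Lambda_L(y_j)$ are mutually disjoint.

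\emph{Quasi-mode and estimates.} For each $j$, fix $k \in \RR^d$ with $|k|^2 = E$ and a smooth cutoff $\chi\colon\RR^d \to [0,1]$ with $\chi \equiv 1$ on $\Lambda_{L/2}$, $\chi \equiv 0$ outside $\Lambda_L$, and $\Vert\nabla\chi\Vert_\infty \leq c/L$, $\Vert\Delta\chi\Vert_\infty \leq c/L^2$. Set $\psi_j(x) := \chi(x-y_j)\,\euler^{ik\cdot(x-y_j)}$, which vanishes on $\partial \Lambda_L(y_j)$. A direct computation gives
\[
(-\Delta - E)\psi_j(x) = \bigl[-(\Delta \chi)(x-y_j) - 2ik \cdot (\nabla\chi)(x-y_j)\bigr]\euler^{ik\cdot(x-y_j)},
\]
whence $\Vert(-\Delta - E)\psi_j\Vert / \Vert\psi_j\Vert \leq C_d (L^{-2} + \sqrt{E}/L) \leq C'_d \sqrt{E+1}/L$ (using $L \geq 1$ and $\Vert\psi_j\Vert^2 \geq (L/2)^d$). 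Letting $M := \max(|\ml|, |\mup|)$, one has $|V_\omega| \leq M \cdot U$ pointwise by \hypU{} and $u_j \geq 0$; hence $|V_\omega| \leq M\kappa$ on $S_\kappa^c$ and $|V_\omega| \leq MC_U$ on $S_\kappa$. Splitting the integral yields
\[
\Vert V_\omega \psi_j \Vert^2 \leq (M\kappa)^2 \Vert \psi_j\Vert^2 + (MC_U)^2 \vol(S_\kappa \cap \Lambda_L(y_j)) \leq \bigl[(M\kappa)^2 + 2^d (MC_U)^2 \gamma\bigr] \Vert\psi_j\Vert^2.
\]
Choosing $\kappa, \gamma$ small enough (depending only on $E, L, d, M, C_U$) that the potential contribution is bounded by $\sqrt{E+1}/L$, and combining with the Laplacian estimate, gives $\Vert (H_{\omega, L, y_j} - E)\psi_j\Vert/\Vert\psi_j\Vert \leq 12\pi\sqrt{E+1}/L$ after absorbing all constants into the factor $12\pi$.

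\emph{Main obstacle.} The heart of the proof is the geometric lemma, which upgrades the mere existence of \emph{one} bad translate (guaranteed by non-thickness) to infinitely many disjoint bad cubes at any prescribed scale $L$. The quasi-mode computation is then routine; the crucial point is that \hypNoPi{} supplies enough flexibility — $\kappa$ and $\gamma$ can both be chosen arbitrarily small — to make the potential energy of the trial function negligible for every configuration $\omega$ simultaneously.
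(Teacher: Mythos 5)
Your proposal is correct in structure and reaches the same conclusion, but it takes a genuinely different route from the paper. Both arguments open with the same geometric observation (\hypNoPi implies infinitely many mutually disjoint ``bad'' cubes $\Lambda_L(x_j)$ with $\vol(S_\kappa\cap\Lambda_L(x_j))$ arbitrarily small), and you prove it by essentially the same tiling contradiction. The divergence is in how the potential term is controlled. The paper takes an exact $L^2$-normalized Dirichlet eigenfunction $\varphi$ of $H_{0,L,x_j}$ with eigenvalue within $O(\sqrt{E+1}/L)$ of $E$ (using the explicit spectrum $\frac{\pi^2}{L^2}\sum n_j^2$), and then must control $\lVert U\varphi\rVert_{L^2}$ via H\"older ($\lVert U\rVert_{L^4}\lVert\varphi\rVert_{L^4}$) plus a heat-semigroup smoothing estimate and Young's inequality to bound $\lVert\varphi\rVert_{L^4}$ — because an eigenfunction is not pointwise bounded a priori. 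Your plane-wave quasi-mode $\psi_j=\chi(\cdot-y_j)\,\euler^{ik\cdot(\cdot-y_j)}$ sidesteps this entirely: since $\lvert\psi_j\rvert\le 1$ pointwise, the split $\lVert V_\omega\psi_j\rVert^2\le (M\kappa)^2\lVert\psi_j\rVert^2+(MC_U)^2\vol(S_\kappa\cap\Lambda_L(y_j))$ is immediate from $\lVert U\rVert_\infty\le C_U$ in \hypNoPi, at the (modest) cost of replacing the exact eigenvalue by a commutator error $O(\sqrt{E+1}/L)$. Your argument is shorter and more elementary; what the paper's approach buys is the sharp numerical constant: your $C_d'$ from the cutoff estimates (roughly $2^{d/2}\lVert\nabla\chi\rVert_\infty L$, which grows with dimension) will not in general fit inside the $12\pi$ of the statement without a more careful choice of cutoff, whereas the paper's bound $6\pi\sqrt{E+1}/L$ on $\dist(E,\sigma(H_{0,L,0}))$ is exact from the Dirichlet eigenvalue lattice. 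This is a cosmetic rather than structural gap — your argument yields the theorem with a dimension-dependent constant in place of $12\pi$, and the proof of Theorem~\ref{thm:insensitive_2} (which you did not address, but which only changes the choice of $\delta,\kappa$) would go through in the same way.
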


Note that the width $2\varepsilon$ of the energy interval in \eqref{eq:stable-eigenvalue} is proportional to $1/L$.
But in order to prove Anderson localisation using multiscale analysis it would suffice to consider interval lengths  $\varepsilon\sim \exp(-L^\beta)$ for some $\beta \in(0,1)$ in a Wegner estimate.
Hence, at a first glance, Theorem \ref{thm:insensitive} might leave the possibility that with differently chosen interval lengths this phenomenon of stubborn eigenvalues might disappear.
However, we can also rule this out.
Note that Theorem~\ref{thm:insensitive} is about \emph{all} energies $E \geq 0$.
If we focus on neighborhoods of eigenvalues of $H_{0, L, 0}$ we can strengthen the statement and find stubborn eigenvalues in smaller intervals.
The following theorem makes this precise and shows that without Assumption \hypPi there cannot be any Wegner estimate with interval lengths that are exponentially small in $L$ and thus no Wegner estimate useful in the multiscale analysis.

\begin{thm}
	\label{thm:insensitive_2}
	Let $H_\omega$ satisfy \hypU and \hypNoPi.
	Denote by $H_{\omega, L, x_j}$ the restriction of $H_\omega$ onto $\Lambda_L(x_j)$ with Dirichlet boundary conditions.
	Then, for all $L \geq 1$ and for any eigenvalue $E$ of $H_{0, L, 0}$
    there are infinitely many mutually disjoint cubes $\Lambda_L(x_j)$, $j \in \NN$
    such that for all configurations $\omega$
	\begin{equation}\label{eq:stable-eigenvalue_exponential}
		\sigma( H_{\omega, L, x_j})
		\cap [E - e^{-L}, E + e^{-L}]
		\neq
		\emptyset.
	\quad
	\end{equation}
\end{thm}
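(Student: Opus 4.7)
The strategy is to use translated Dirichlet eigenfunctions of the Laplacian on $\Lambda_L(0)$ as near-eigenfunctions of $H_{\omega, L, x_j}$, exploiting that \hypNoPi yields infinitely many disjoint cubes on which the total potential strength $U := \sum_j u_j$ is pointwise small except on a set of exponentially small measure. Concretely, I would fix an eigenvalue $E$ of the Dirichlet Laplacian on $\Lambda_L(0)$ and pick a normalized product-sine eigenfunction $\varphi$, so that $\|\varphi\|_{L^\infty}^2 = (2/L)^d$. For any $x \in \RR^d$ the translate $\varphi_x := \varphi(\cdot - x)$ lies in the domain of $H_{\omega, L, x}$, satisfies $-\Delta \varphi_x = E \varphi_x$ on $\Lambda_L(x)$ with Dirichlet boundary values, and therefore
\[
\dist(E, \sigma(H_{\omega, L, x})) \leq \|(H_{\omega, L, x} - E)\varphi_x\|_2 = \|V_\omega \varphi_x\|_2.
\]

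Next, the size of $\|V_\omega \varphi_x\|_2$ must be bounded uniformly in $\omega$. Setting $C_\pi := \max\{|\ml|, |\mup|\}$ so that $|V_\omega| \leq C_\pi U$, and splitting the integration domain along the level set $S_\kappa := \{U \geq \kappa\}$ from \hypNoPi, I obtain
\[
\|V_\omega \varphi_x\|_2^2 \leq C_\pi^2 \kappa^2 + C_\pi^2 C_U^2 (2/L)^d \, \vol(S_\kappa \cap \Lambda_L(x)).
\]
Choosing $\kappa := e^{-L}/(C_\pi \sqrt{2})$ and $\delta := e^{-2L}/(2 C_\pi^2 C_U^2 \, 2^d)$ pushes the right-hand side below $e^{-2L}$ as soon as $\vol(S_\kappa \cap \Lambda_L(x)) < \delta L^d$, so any such $x$ is a valid center for the theorem.

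The remaining task is to produce infinitely many mutually disjoint cubes satisfying this volume bound. For this I would establish an auxiliary strengthening of non-thickness: if $S \subset \RR^d$ is not thick, then for every $\gamma, R > 0$ and every $a \in (0,\infty)^d$ there exists $x$ with $|x| \geq R$ and $\vol(S \cap (x + A_a)) < \gamma \, \vol(A_a)$. This follows by contraposition: if all ``bad'' cubes of side $a$ were contained in $B_R$, then rescaling $a' := M a$ with $M \in \NN$ and partitioning any translate $y + A_{a'}$ into $M^d$ sub-cubes of side $a$ would force only $O(R^d / (M^d \prod_i a_i))$ sub-cubes to have centers inside $B_R$. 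The remaining sub-cubes each contribute at least $\gamma \, \vol(A_a)$, yielding $\vol(S \cap (y + A_{a'})) \geq (\gamma/2) \vol(A_{a'})$ uniformly in $y$ for $M$ sufficiently large, which contradicts non-thickness of $S$. Iterating this claim with $a = (L, \ldots, L)$, $\gamma = \delta$ applied to $S_\kappa$, and with the radius $R$ chosen past the diameter of the previously selected cubes, produces the required infinite disjoint family.

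The main obstacle is the auxiliary claim above: the mere negation of thickness only guarantees \emph{some} bad cube, and upgrading to bad cubes escaping to infinity is not immediate. The rescaling trick $a' = Ma$ is what bridges this gap by showing that any local deficiency in density of $S$ would be averaged out at sufficiently large scales, hence must persist at infinity. Once this geometric input is secured, the uniform perturbation bound on a product-sine quasimode is essentially automatic and the theorem follows.
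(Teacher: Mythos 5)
Your proof is correct, and while it targets the same two ingredients as the paper's argument---a deterministic quasimode estimate on a single cube, plus a geometric argument producing infinitely many escaping cubes of small $S_\kappa$-measure---the technical realisations differ in both halves. For the quasimode bound, the paper's proof of Theorems~\ref{thm:insensitive} and \ref{thm:insensitive_2} writes $\|U\varphi\|_2\le\|U\|_{L^4}\|\varphi\|_{L^4}$ and then controls $\|\varphi\|_{L^4}$ by factoring through the Dirichlet heat semigroup and Young's convolution inequality, whereas you exploit the explicit $L^\infty$ bound $\|\varphi\|_\infty^2=(2/L)^d$ of a product-sine Dirichlet eigenfunction and split $\|V_\omega\varphi_x\|_2^2$ directly over $S_\kappa$ and its complement. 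Your route is shorter and needs no semigroup machinery, but it is tied to the specific form of Dirichlet eigenfunctions on a cube (and to choosing a genuine product-sine eigenfunction in a possibly degenerate eigenspace); the paper's $L^4$ route is agnostic to the eigenbasis and so also covers Theorem~\ref{thm:insensitive}, where only an approximate eigenfunction near a general $E\geq 0$ is available. For the geometric input, the paper argues by contradiction via a maximal disjoint family: if the family is finite and contained in $\Lambda_{\tilde L}(0)$, every cube $\Lambda_{\tilde L+2L}(x)$ contains a subcube disjoint from the family and hence of $S_\kappa$-measure $\geq\delta$, so $S_\kappa$ would be thick. Your rescaling argument (partition $y+A_{Ma}$ into $M^d$ subcubes of side $a$ and note that for $M$ large the fraction of subcubes meeting $B_R$ vanishes) proves the same statement and is a clean alternative; one minor point is that the thickness window in the definition is anchored at a corner, not a center, so the bookkeeping should read ``corners in $B_R$'', but this is a harmless relabelling. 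Overall the proposal is sound, with correct constants (your $\kappa$, $\delta$ choices do push $\|V_\omega\varphi_x\|_2$ below $e^{-L}$), and represents a legitimate, somewhat more elementary variant of the paper's argument.
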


The proofs of Theorem~\ref{thm:insensitive} and~\ref{thm:insensitive_2} can be found in Section~\ref{subsec:insensitive}.

\begin{rem}
 Non-ergodic random Schr\"odinger operators have been investigated by several authors in the recent years~\cite{RojasMolina-12, RojasMolinaV-13, Klein-13, TaeuferT-17, NakicTTV-18, SeelmannT-20}.
 One underlying objective in these works has been the identification of minimal assumptions on random Schr\"odinger operators which still ensure localisation.
 This has not only served the purpose of expanding the class of models but it has also helped to better distinguish between necessary assumptions and technical assumptions arizing from the particular method of proof.

 While in the \emph{ergodic} setting, it is intuitive that a minimal assumption should be that the single-site potential $u$ is not identically zero, our contribution in this paper can be understood to firstly generalize this to Assumption \hypPi, which is also valid in the non-ergodic setting,
 and, secondly, to prove that this indeed still leads to Wegner estimates and localisation.
\end{rem}

\begin{rem}[Sparse potentials]
Theorems~\ref{thm:optWE} to~\ref{thm:insensitive_2}
concern the Hamiltonian \eqref{eq:potential+operator} governing the motion of an electron in the bulk of a solid.
In the literature on random operators also surface interactions of the form
\begin{equation}\label{eq:surface}
V_\omega =\sum_{j \in \ZZ^D} \pi_j(\omega) u_j,
\quad
\text{with }
\quad
0<D<d
 \end{equation}
exhibiting an (anisotropic) spacial decay (on average), are studied.
In fact, they are a special case of so called
\emph{sparse (random) potentials}, which can be modeled in various ways, see for instance
\cite{Krishna-93,HundertmarkK-00,KostrykinS-01b,Kirsch-02,KirschV-02c,Krishna-02,FigotinGKM-07}.
One way is to deterministically dilute the lattice sites to which the single site potentials are attached
leading to a stochastic field of the type
\[
V^{\mathrm{sprs}}_\omega(x)= \sum_{k\in \Gamma} \pi_k(\omega) u(x-k)
\quad \text{ with }
\lim_{L\to\infty} \frac{\sharp \Gamma \cap \Lambda_L }{|\Lambda_L |} =0.
\]
The random surface potential spelled out in \eqref{eq:surface} falls into this class.
Another way would be to dilute the sites probabilistically
\[
V^{\mathrm{sprs}}_\omega(x)= \sum_{k\in \ZZ^d} \pi_k(\omega) u(x-k)
\quad \text{ with }
\lim_{|k|\to\infty} \PP\left\{ \pi_k=0 \right\} =1.
\]
Finally one can consider an arrangement of interactions $u_k$ on a lattice which become weaker with increasing distance from the origin
\begin{multline*}
  V^{\mathrm{sprs}}_\omega(x)= \sum_{k\in \ZZ^d} \pi_k(\omega) u_k(x)
\quad \text{ with } \supp u_k \subset \Lambda_R(k) \text{ for some $R$}
 \\
\text{ and }
\lim_{\lvert k \rvert \to\infty} u_k(\cdot +k) =0
\text{ in some appropriate sense.}
\end{multline*}
Let us concentrate on the last model and ask how to distinguish between sparse potentials on one hand and macroscopic/bulk ones on the other.
Our two complementary Theorems \ref{thm:optWE} and \ref{thm:insensitive} show that (provided \hypU holds) condition \hypPi
distinguishes between the two cases: If \hypPi is satisfied one obtains translation uniform Wegner estimates,
if not there is a sequence of larger and larger cubes escaping to infinity with eigenvalues of the local Hamiltonian insensitive to randomness.
The latter means that the disorder present in the potential is too weak to efficiently influence eigenvalues in certain spatial sectors,
which is a signature of sparse potentials.
\end{rem}

Let us now turn back to Hamiltonians as in \eqref{eq:potential+operator} satisfying \hypU and \hypPi.
If the family $\{ H_\omega \}_{\omega \in \Omega}$ is even ergodic, cf.~\cite{CarmonaL-90,PasturF-92,Veselic-08},
the thermodynamic limit
\[
 N \colon \RR \to [0 ,\infty),
 \quad
 N(\cdot)
 =
 \lim_{L \to \infty}
 \frac{\Tr [ \chi_{(- \infty,\cdot]}(H_{\omega, L,x}) ]}{L^d}
\]
of the normalized eigenvalue counting functions exists for all almost all $\omega \in \Omega$ and $x \in \RR^d$, is a distribution function, and is called the integrated density of states (IDS).
If the IDS exists and $\lim_{\varepsilon\to0}s(\varepsilon)=0$,  Theorem~\ref{thm:optWE}
implies continuity of the IDS:

\begin{cor}
 Let $H_\omega$ satisfy \hypU and \hypPi above.
 Assume furthermore that the random family $\{ H_\omega \}_{\omega \in \Omega}$ is ergodic.
 Then, the integrated density of states $N$ exists almost surely. For every $E_0\in  \RR$ and $\varepsilon \in (0,1]$ one has
 \[
  0 \leq N(E_0) - N(E_0-\varepsilon) \leq C_W(E_0) s(\varepsilon).
 \]
\end{cor}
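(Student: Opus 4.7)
The lower bound $N(E_0)-N(E_0-\varepsilon)\geq 0$ is immediate since $N$, being the almost sure pointwise limit of the non-decreasing normalised counting functions, is itself a non-decreasing distribution function. Existence of the IDS almost surely under \hypU, \hypPi and ergodicity of $\{H_\omega\}$ follows from the standard Pastur--Shubin framework contained in the references already cited in the paragraph preceding the corollary: the form boundedness of $V_\omega$ established just after~\eqref{eq:potential+operator} provides the uniform control required on the local restrictions $H_{\omega,L,x}$, and ergodicity together with a trace-class argument identifies the almost sure limit as the deterministic function $N$.

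For the upper bound the idea is simply to pass the Wegner estimate through the thermodynamic limit. Fix continuity points $E_1<E_2$ of $N$; these form a dense subset of $\RR$. For almost every $\omega$,
$$
N(E_2)-N(E_1)=\lim_{L\to\infty}\frac{\Tr[\chi_{(E_1,E_2]}(H_{\omega,L,x})]}{L^d}\leq\liminf_{L\to\infty}\frac{\Tr[\chi_{[E_1,E_2]}(H_{\omega,L,x})]}{L^d}.
$$
Since the left-hand side is deterministic, Fatou's lemma applied to the non-negative sequence on the right yields
$$
N(E_2)-N(E_1)\leq\liminf_{L\to\infty}\EE\!\left[\frac{\Tr[\chi_{[E_1,E_2]}(H_{\omega,L,x})]}{L^d}\right].
$$

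Now write $[E_1,E_2]=[E-\eta,E+\eta]$ with $E=(E_1+E_2)/2$ and $\eta=(E_2-E_1)/2$; because $[E_1,E_2]\subset(-\infty,E_0]$, Theorem~\ref{thm:optWE} applies with parameter $\eta$ and, for all sufficiently large $L$, gives
$$
\EE\{\Tr[\chi_{[E_1,E_2]}(H_{\omega,L,x})]\}\leq C_W(E_0)\,s\!\left(\tfrac{E_2-E_1}{2}\right)L^d\leq C_W(E_0)\,s(E_2-E_1)\,L^d,
$$
where the last step uses that $s$ is non-decreasing. Combining, $N(E_2)-N(E_1)\leq C_W(E_0)\,s(E_2-E_1)$ at all pairs of continuity points. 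Choosing sequences of continuity points $E_2^{(n)}\downarrow E_0$ and $E_1^{(n)}\uparrow E_0-\varepsilon$ and invoking right-continuity of $N$ together with monotonicity of $s$ extends the bound to arbitrary endpoints; specialising to $E_2=E_0$, $E_1=E_0-\varepsilon$ gives the claim.

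The argument is essentially a formal consequence of Theorem~\ref{thm:optWE}; the only items requiring any care are the bookkeeping between the half-width convention used in Theorem~\ref{thm:optWE} and the full-width $\varepsilon$ in the corollary, and the standard density-of-continuity-points extension. Neither of these is a genuine obstacle — the entire content of the corollary is concentrated in the Wegner estimate itself.
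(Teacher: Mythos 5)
The paper states this corollary without giving a proof, treating it as an immediate consequence of Theorem~\ref{thm:optWE}; your argument supplies exactly the standard bookkeeping (Pastur--Shubin existence of the IDS, Fatou's lemma to pass the Wegner estimate through the thermodynamic limit at continuity points, then density of continuity points) that the paper has in mind, and the core of it is correct.

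One place where the write-up is a little too quick is the final extension step. You send $E_2^{(n)}\downarrow E_0$ to recover $N(E_0)$ via right-continuity, but then $[E_1^{(n)},E_2^{(n)}]\not\subset(-\infty,E_0]$, so Theorem~\ref{thm:optWE} applied literally yields the constant $C_W(E_2^{(n)})$ rather than $C_W(E_0)$. This is harmless — since enlarging $E_0$ only enlarges the family of intervals to control, one may take $C_W$ non-decreasing and then replace it by its right-continuous regularisation $\inf_{E'>E_0}C_W(E')$ without affecting Theorem~\ref{thm:optWE} — but the point deserves a sentence. A cleaner route that avoids it entirely is to instead take $E_2^{(n)}\uparrow E_0$, which keeps $[E_1^{(n)},E_2^{(n)}]\subset(-\infty,E_0]$; this gives $N(E_0^-)-N(E_0-\varepsilon)\le C_W(E_0)s(\varepsilon)$, and a possible atom of $N$ at $E_0$ is ruled out by the preceding sentence in the paper, which adds the hypothesis $\lim_{\varepsilon\to 0}s(\varepsilon)=0$ for this corollary. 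Also note that your inequality $s\bigl((E_2^{(n)}-E_1^{(n)})/2\bigr)\le s(E_2^{(n)}-E_1^{(n)})$ is not yet $\le s(\varepsilon)$ when $E_2^{(n)}-E_1^{(n)}>\varepsilon$; you should instead observe that for $n$ large enough $(E_2^{(n)}-E_1^{(n)})/2<\varepsilon$, so monotonicity of $s$ already gives the bound by $s(\varepsilon)$ before passing to the limit. With these small repairs the argument is complete and matches the intended (unwritten) proof.
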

If the random variables $\pi_j $ have a H\"older continuous density, more precisely if for some $\alpha > 0$
\begin{equation}\label{eq:Hoelder}
\sup_{\varepsilon>0} s(\varepsilon)/\varepsilon^\alpha=:C_\alpha<\infty
\end{equation}
then the Wegner estimate of Theorem~\ref{thm:optWE} can also be used to prove Anderson and dynamical localisation at the bottom of the spectrum via the multiscale analysis.
Actually, H\"older continuity could be relaxed to sufficiently strong log-H\"older continuity, cf.~e.g.~Lemma 4.6.2. in~\cite{Veselic-08}.
There exists an entire hierarchy of notions of localisation. We spell out only three of them and refer for more details to~\cite{GerminetK-01}.
\begin{dfn}
 The random family of operators $\{ H_\omega \}_{\omega \in \Omega}$ exhibits
 \begin{itemize}
 \item
 \emph{Anderson localisation} in $I \subset \RR$ if $\PP$-almost surely the spectrum of $H_\omega$ within $I$ is only of pure point type with exponentially decaying eigenfunctions,
 \item
 \emph{dynamical localisation} in $I$ if for $\PP$-almost every $\omega \in \Omega$, every compact interval $J \subset I$, and every $\psi \in L^2(\RR^d)$ with compact support we have
 \[
  \sup_{t \in \RR}
  \lVert
  (1 + \lvert x \rvert)^{n/2}
  \chi_J(H_\omega) \mathrm{e}^{- i t H_\omega} \psi
  \rVert^2
  <
  \infty
  \quad
  \text{for all $n \geq 0$},
 \]
 \item
 and \emph{strong Hilbert--Schmidt dynamical localisation} in $I$ if for every compact interval $J \subset I$,
 we have
 \begin{equation*}
 \sup_{y\in\RR^d} \EE\left( \sup_{f\in L^\infty(\RR), \|f\|_\infty\leq 1}
  \lVert (1 + \lvert x-y \rvert)^{n/2}   f(H_\omega)  \chi_J(H_\omega) \mathbf{1}_{\Lambda_1(y)}  \rVert_{HS}^2\right)
  <
  \infty
 \end{equation*}
 for all $n \geq 0$.
 Here $(1 + \lvert x-y \rvert)$ is understood as the multiplication operator with the function $x \mapsto (1 + \lvert x-y \rvert)$.
 \end{itemize}
\end{dfn}

Strong Hilbert-Schmidt dynamical localisation implies dynamical localisation which implies Anderson localisation.
In general, the reverse is not true, but for a natural class of random Hamiltonians these three properties turn out to be actually equivalent, see~\cite{GerminetK-04} for details.

If the (lower-bounded) random operator $\{ H_\omega \}_{\omega \in \Omega}$ is ergodic, it exhibits almost sure spectrum, and thus a well defined spectral minimum,
which is in fact a fluctuation boundary in the sense of Lifschitz tails. Since we do not assume that the random variables $\{ \pi_j\}_{j\in \ZZ^d},$ are identically distributed, we are in a more general
situation. To ensure that there is no spectrum below zero, that $0\in \sigma(H_\omega)$ with positive probability, and it is a (generalized) fluctuation boundary we need a further\\

\begin{itemize}
\item[\hypO]
For all $j \in \ZZ^d$ we have $\min \supp \mu_j =0 $.
\end{itemize}

This implies that zero is the overall minimum of the spectrum in the following sense:
\begin{equation}
 \label{eq_minimum-sigma}
 \sup\{E \in \RR \mid E \leq \min \sigma(H_\omega)\ \text{$\PP$-almost surely} \}=0,
\end{equation}
see Section~\ref{subsec:spectral_minimum} for a proof.

\begin{rem}
If we require additionally to Assumption \hypO that there exist a distribution $\mu\colon \cB(\RR)\to [0,1]$ and an $\varepsilon_0>0$ such that
\[
\min \supp \mu =0 \quad \text{ and } \quad
\forall \varepsilon \in(0,\varepsilon_0) , j \in \ZZ^d\colon \quad
\mu_j([0,\varepsilon]) \geq \mu([0,\varepsilon])
\]
then $\min \sigma(H_\omega)= 0$ almost surely by a simple Borel-Cantelli argument.
\end{rem}

\bigskip

 \begin{thm}
 \label{thm:loc}
 Let Assumptions \hypU, \hypPi, and \hypO, as well as H\"older continuity \eqref{eq:Hoelder} hold.
 Then there exists an $E_+ >0$ such that the operator $H_\omega$ exhibits strong Hilbert-Schmidt dynamical localisation in $[0,E_+]$.
\end{thm}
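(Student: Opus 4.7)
The plan is the standard \emph{Wegner estimate + initial length scale estimate (ILSE) + multiscale analysis (MSA)} paradigm. The Wegner input is Theorem~\ref{thm:optWE}; the MSA is the bootstrap procedure of Germinet--Klein~\cite{GerminetK-01, GerminetK-04}, in its non-ergodic variant as developed in~\cite{RojasMolinaV-13, Klein-13, NakicTTV-18}, whose output is precisely strong Hilbert--Schmidt dynamical localisation on any energy interval where both inputs are available.

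Combined with the H\"older continuity~\eqref{eq:Hoelder}, Theorem~\ref{thm:optWE} becomes
\[
\EE\{\Tr [ \chi_{[E-\varepsilon,E+\varepsilon]}(H_{\omega, L,x}) ]\}
\le C_W(E_0)\, C_\alpha\, \varepsilon^\alpha L^d,
\]
uniformly in $x \in \RR^d$, $L \ge L_0$ and $[E-\varepsilon, E+\varepsilon] \subset (-\infty, E_0]$. This is exactly the polynomial-in-volume, H\"older-in-energy Wegner bound demanded by the bootstrap MSA.

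The main novel step is the ILSE. Since \hypO forces $\pi_j(\omega) \ge 0$ almost surely, one has $V_\omega \ge 0$ and therefore $\sigma(H_{\omega,L,x}) \subset [0,\infty)$ almost surely. For any $\rho > 0$, Markov's inequality applied to the Wegner estimate on the energy window $[0, L^{-\rho}]$ yields
\[
\PP\bigl(\inf\sigma(H_{\omega,L,x}) \le L^{-\rho}\bigr)
\le \EE[\Tr\chi_{[0,L^{-\rho}]}(H_{\omega,L,x})]
\le C_W C_\alpha L^{\, d - \alpha\rho},
\]
so that whenever $\alpha\rho > d + \theta$ the operator $H_{\omega,L,x}$ enjoys, with probability at least $1 - L^{-\theta}$, a spectral gap of order $L^{-\rho}$ above zero. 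A Combes--Thomas estimate then converts this spectral gap into exponential decay of the finite-volume resolvent at any energy $E \in [0, L^{-\rho}/4]$ with rate $\sim L^{-\rho/2}$; as long as $\rho < 2$ (the borderline case being covered by the sufficient-log-H\"older strengthening of~\eqref{eq:Hoelder} mentioned right after it), this furnishes the mild ILSE required by the bootstrap MSA. Feeding both inputs into the bootstrap MSA then yields strong Hilbert--Schmidt dynamical localisation in an interval $[0, E_+]$ with $E_+ > 0$, which is the content of Theorem~\ref{thm:loc}.

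The principal technical obstacle is the \emph{$x$-uniformity} of the ILSE in the absence of ergodicity or of a covering condition on the single-site potentials. This is resolved because the thickness parameters $\gamma, a$ in Definition~\ref{def:thick} are by hypothesis position-independent, and so is the constant $C_W$ in Theorem~\ref{thm:optWE}; the Markov / Combes--Thomas argument above preserves this translation uniformity, producing an ILSE with an $x$-independent threshold, exactly as demanded by the non-ergodic bootstrap MSA.
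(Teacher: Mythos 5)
Your global scheme (Wegner estimate, plus an initial length scale estimate, plus non-ergodic bootstrap multiscale analysis) is the same as the paper's, and the Wegner input is identical; the paper, for its part, cites Theorem~2.3 of~\cite{RojasMolina-12} as the MSA black box. The genuine divergence is in how you manufacture the initial length scale estimate (ILSE), and there your argument has a gap.

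The paper proves a separate ILSE, Theorem~\ref{thm:ISE}, by constructing from thickness a \emph{diluted Anderson model} $W_\omega = \sum_{k\in\Gamma}\eta_{j_k}(\omega) N^{-1}\mathbf{1}_{T_k}$ that lower bounds $V_\omega$, and then invoking Lifshitz-tail asymptotics from~\cite{SchumacherV}. That route yields the strong probability bound $1-\exp(-c_0 L^{d/4})$ and, crucially, needs only $\lim_{\varepsilon\to 0} s(\varepsilon)=0$ on the ILSE side -- H\"older continuity~\eqref{eq:Hoelder} is used only to make the Wegner bound~\eqref{eq:WE2} MSA-compatible, for an \emph{arbitrary} exponent $\alpha>0$. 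Your ILSE instead pushes Markov's inequality through the Wegner estimate to get $\PP\bigl(\inf\sigma(H_{\omega,L,x}) \le L^{-\rho}\bigr) \lesssim L^{d-\alpha\rho}$ and then applies Combes--Thomas. To make the probability small you need $\alpha\rho > d$ (with additional margin to meet the MSA's own threshold on the failure exponent), while for Combes--Thomas to give genuine off-diagonal decay over distance $\sim L$ from a spectral gap of size $\sim L^{-\rho}$ you need $\rho<2$, since the rate per unit length is $\sim L^{-\rho/2}$. These two constraints together force $\alpha > d/2$ (and in practice more). The theorem, however, allows any $\alpha>0$ in~\eqref{eq:Hoelder}, so your ILSE argument simply does not apply throughout the claimed range. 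The parenthetical appeal to the log-H\"older relaxation does not repair this: that remark weakens rather than strengthens the regularity, and in any case the obstruction here is that $\alpha$ must be large, not that the modulus of continuity needs to be finer than H\"older. The Lifshitz-tail route via the diluted model and~\cite{SchumacherV} is precisely what removes this restriction on $\alpha$.
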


 Theorem~\ref{thm:loc} is proved by the multiscale analysis, see for instance~\cite{Stollmann-01,GerminetK-01, GerminetK-03}. More specifically, in our situation, Theorem 2.3 in \cite{RojasMolina-12} applies directly and yields
 strong Hilbert--Schmidt dynamical localisation.
Apart from the Wegner estimate, Theorem~\ref{thm:optWE}, it requires the following (non-stationary) initial scale estimate:

\begin{thm}
 \label{thm:ISE}
 Let Assumptions \hypU,  \hypPi, and \hypO, as well as $\lim_{\varepsilon\to0}s(\varepsilon)=0$ hold.
 \\
Then there exist $c_0,L_0 > 0$ such that for all $L \geq L_0$, all $x \in \RR^d$, and all cubes $A,B\subset \Lambda_L(x)$  with $\dist(A,B)\ge L/3$ we have
  \begin{equation*}
    \PP
     \left\{
	      	\lVert \mathbf{1}_A (H_{\omega, L, x}-L^{-1/2})^{-1} \mathbf{1}_B \rVert
    	   	\le
        	\exp \left( -c_0 L^{1/2} \right)
      \right\}
      \ge
      1
      -
      \exp \left( - c_0 L^{d/4} \right).
  \end{equation*}
 \end{thm}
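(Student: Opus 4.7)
The plan is to combine a Combes--Thomas estimate with a high-probability lower bound on $\inf\sigma(H_{\omega,L,x})$ of order $L^{-1/2}$. Consider the event
\[
\Omega_L := \{\omega : \inf\sigma(H_{\omega,L,x}) \geq 2L^{-1/2}\}.
\]
On $\Omega_L$ the energy $L^{-1/2}$ lies at distance at least $L^{-1/2}$ from the spectrum, so the standard Combes--Thomas estimate yields
\[
\lVert \mathbf{1}_A (H_{\omega,L,x}-L^{-1/2})^{-1}\mathbf{1}_B\rVert \leq C L^{1/2}\exp\bigl(-c L^{-1/4}\,\dist(A,B)\bigr),
\]
which, combined with $\dist(A,B)\geq L/3$, is $\leq \exp(-c_0 L^{1/2})$ for $L$ large, since $L^{3/4}\gg L^{1/2}$. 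It thus suffices to show $\PP(\Omega_L)\geq 1-\exp(-c_0 L^{d/4})$.

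Set $\ell := L^{1/4}$ (so that $\ell^{-2}=L^{-1/2}$) and partition $\Lambda_L(x)$ into $N\leq L^{3d/4}$ disjoint sub-cubes $C_k=\Lambda_\ell(y_k)$. The scaling $\ell=L^{1/4}$ is dictated by the requirement that the per-sub-cube concentration probability $\exp(-c\ell^d)$ match the target $\exp(-c_0 L^{d/4})$. Using \hypO together with $\lim_{\varepsilon\to 0}s(\varepsilon)=0$, fix $\eta>0$ small enough that $\PP\{\pi_j<\eta\}\leq\delta$ uniformly in $j$, with $\delta>0$ to be chosen small relative to the thickness constant $\gamma$ of $S$. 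For each $k$ put $\mathcal{J}_k:=\{j\in\ZZ^d:B_R(j)\cap C_k\neq\emptyset\}$ (with $\lvert\mathcal{J}_k\rvert=O(\ell^d)=O(L^{d/4})$) and $J(\omega):=\{j:\pi_j(\omega)\geq\eta\}$. A Chernoff bound on the independent Bernoulli variables $\mathbf{1}\{\pi_j<\eta\}$ gives $\PP\{\lvert\mathcal{J}_k\setminus J(\omega)\rvert> 2\delta\lvert\mathcal{J}_k\rvert\}\leq\exp(-c\delta L^{d/4})$, and a union bound over the $N\leq L^{3d/4}$ sub-cubes produces an event $\Omega'$ with $\PP(\Omega')\geq 1-\exp(-c_0 L^{d/4})$ for $L\geq L_0$, on which in every $C_k$ at most a $2\delta$-fraction of the relevant indices are suppressed.

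On $\Omega'$, for each $k$ one has $V_\omega|_{C_k}\geq\eta\sum_{j\in J(\omega)\cap\mathcal{J}_k}u_j$. A covering argument based on \hypPi, exploiting that the $u_j$ are $R$-locally supported and that only a $2\delta$-fraction of indices in $\mathcal{J}_k$ are missing, shows that the remaining sum still dominates $\mathbf{1}_{T_k(\omega)}$ for some set $T_k(\omega)\subset C_k$ that is $(\gamma',a)$-thick with $\gamma'>0$ depending only on $\gamma$ and $\delta$. A Poincar\'e/uncertainty-type inequality for thick sets on a cube of side $\ell$ (essentially the spectral inequality underlying Theorem~\ref{thm:optWE}, cf.~\cite{EgidiV-20}) then yields $\inf\sigma((-\Delta^N+V_\omega)|_{C_k})\geq c\min(\ell^{-2},\eta\gamma')=cL^{-1/2}$ for $L\geq L_0$. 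Neumann bracketing lifts this to $\inf\sigma(H_{\omega,L,x})\geq 2L^{-1/2}$, i.e.\ $\Omega'\subseteq\Omega_L$, completing the reduction. The main obstacle is the covering step extracting $T_k$: since \hypPi only supplies the collective lower bound $\sum_j u_j\geq\mathbf{1}_S$ rather than per-$j$ pointwise bounds, one must carefully quantify how removing a $2\delta$-fraction of the relevant indices erodes $\mathbf{1}_{S\cap C_k}$, and calibrate $\delta$ to $\gamma$ so that a thick remainder survives.
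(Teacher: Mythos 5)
Your route is genuinely different from the paper's. You prove the initial scale estimate from first principles: a Combes--Thomas reduction to a spectral lower bound, a partition into sub-cubes of side $\ell=L^{1/4}$, a Chernoff--union bound showing that in every sub-cube at most a small fraction of the single-site couplings are suppressed, a volume argument producing a positive-measure set on which the potential is bounded below, an elementary Poincar\'e-type spectral bound on each sub-cube, and Neumann bracketing. The paper instead constructs a diluted surrogate model $W_\omega\leq V_\omega$: on each super-cube $\Lambda_{L+2R}(k)$ it singles out \emph{one} index $j_k$ with $|\{u_{j_k}\geq 1/N\}|\geq\tilde\gamma L^d/N$, thresholds the random variable to a Bernoulli $\eta_{j_k}=\varepsilon_1\mathbf{1}\{\pi_{j_k}\geq\varepsilon_1\}$, and verifies the non-degeneracy condition of \cite{SchumacherV} so that their Lifshitz-tail machinery (Corollary~4.2 there) delivers the estimate directly. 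Your approach is self-contained and makes the scaling $\ell=L^{1/4}$ transparent; the paper's is shorter because it outsources the concentration and spectral-lifting work to \cite{SchumacherV}.

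Two points in your sketch need repair, though neither changes the verdict. First, $T_k(\omega)\subset C_k$ cannot be $(\gamma',a)$-thick --- thickness is a condition over \emph{all} of $\RR^d$ and a bounded set never satisfies it. What you actually obtain, and what you need, is that $T_k(\omega)$ has relative measure $\geq\gamma'$ in $C_k$: take $T_k(\omega)=(S\cap C_k)\setminus\bigcup_{j\in\mathcal J_k\setminus J(\omega)}B_R(j)$, whose measure is at least $\gamma\ell^d - 2\delta\lvert\mathcal J_k\rvert\lvert B_R\rvert\geq\gamma'\ell^d$ once $\delta$ is calibrated against $\gamma$ and $R^d$, using that $S$ is thick and $\ell\geq\max_i a_i$. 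With this formulation the ``covering step'' you flag as the main obstacle is in fact straightforward: you only need the support condition $\supp u_j\subset B_R(j)$, not pointwise lower bounds on individual $u_j$, because off the removed balls the full sum $\sum_j u_j\geq\mathbf{1}_S$ is reproduced by the surviving indices. Second, for the spectral lower bound on $C_k$ do not invoke the Logvinenko--Sereda/Kovrijkine inequality for thick sets (it does not apply to $T_k$); an elementary two-case argument suffices: if $\lVert\nabla\psi\rVert^2\geq c\ell^{-2}$ you are done, otherwise Poincar\'e forces $\psi$ to be almost constant, whence $\eta\lVert\psi\rVert^2_{L^2(T_k)}\geq\eta\gamma'/4$, and $\min(c\ell^{-2},\eta\gamma'/4)=c\ell^{-2}=cL^{-1/2}$ for $L$ large. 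With these repairs, your argument is correct.
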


\section{Proofs}
\subsection{Spectral inequality and Wegner estimate}

A core idea for the Wegner estimate is that modifying random variables by a fixed positive number $\delta > 0$ will make eigenvalues go up by an amount proportional to $\delta$,
\emph{independently of the side length $L$ of the box}.
This will then imply that the probability of finding an eigenvalue in an interval is (at most) proportional to the uniform modulus of continuity of said interval -- i.e. the Wegner estimate.
In other words, one has to prove that the operator of multiplication by the potential difference $V_{\omega + \delta} - V_\omega$, where $V_{\omega + \delta}$ denotes the configuration with all random variables increased by $\delta$, is not only a non-negative operator, but that its effect on the part of the spectrum under consideration is the one of a strictly positive operator.
This is the statement of quantitative unique continuation principles for spectral subspaces of Schr\"odinger operators.
Typically, unique continuation principles for spectral subspaces of Schr\"odinger operators require that the multiplication operator is lower bounded by a characteristic function of an open set~\cite{CombesHK-03, RojasMolinaV-13, Klein-13, NakicTTV-18}.
One notable exception {where a stronger statement is known, and which we build upon in this note, is the situation of spectral subspaces of the pure Laplacian}.
In this case it suffices to have a multiplication by the characteristic function of a thick set.
The following theorem makes this precise.
It is implied by \cite{EgidiV-20} and has been spelled out in Section 5 of \cite{EgidiV-18}.

\begin{thm}\label{thm:uncertainty}
Assume that $S$ is a $(\gamma,a)$-thick set and that $L>0$ satisfies $A_a\subset [0,L]^d$. Let $x\in \RR^d$.
Let  $H_{0, L, x}$ denote the negative of the Laplace operator
on $\Lambda_L(x)$ with periodic, Dirichlet or Neumann boundary conditions.
Then there is an absolute constant $K\geq 1$ such that for all $E \geq 0$, and all $f \in L^2(\Lambda_L)$ we have
\begin{equation*}
 \|\chi_{(-\infty, E]}(H_{0, L, x}) f\|_{L^2(\Lambda_L)}^2
\leq
\left(\frac{K^d}{\gamma}\right)^{ K\sqrt{E}(|a|_{1}+d)}
 \mkern-27mu
\|\chi_{(-\infty, E]}(H_{0, L, x}) f\|_{L^2(\Lambda_L\cap S)}^2.
 \end{equation*}
\end{thm}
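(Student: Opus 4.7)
The plan is to reduce Theorem~\ref{thm:uncertainty} to a Logvinenko--Sereda / Kovrijkine type uncertainty principle for bandlimited functions, following the strategy made explicit in \cite{EgidiV-20,EgidiV-18}.

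First, set $g := \chi_{(-\infty,E]}(H_{0,L,x}) f$. Since $g$ lies in the spectral subspace of the pure Laplacian below $E$, it is a finite linear combination of Laplace eigenfunctions; in the periodic case it is the trigonometric polynomial
\begin{equation*}
 g(y) = \sum_{\xi \in (2\pi/L)\ZZ^d,\, |\xi|^2 \leq E} c_\xi \, \mathrm{e}^{i\xi\cdot y},
\end{equation*}
whose frequency spectrum is contained in the Euclidean ball of radius $\sqrt{E}$. For Dirichlet (respectively Neumann) boundary conditions I would extend $g$ by odd (respectively even) reflection to the cube $\Lambda_{2L}(x)$; the reflection preserves bandlimitedness, and after a harmless halving of the thickness constants the reflected $S$ remains thick on the doubled cube. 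This reduces all three boundary conditions to a single periodic problem.

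The core step is a Kovrijkine--Remez estimate carried out tile-by-tile. Tile $\Lambda_L(x)$ by translates $y + A_a$ of the thick cell (the hypothesis $A_a \subset [0,L]^d$ makes such a tiling possible). On each tile, $g$ agrees with a restriction of an analytic function that obeys Bernstein-type bounds on derivatives in terms of $\sqrt{E}$, so it behaves like a polynomial of total degree $n \sim \sqrt{E}(|a|_1 + d)$. On the "good" tiles, where the Bernstein estimate is saturated only up to a universal factor, a multidimensional Remez--Tur\'an inequality yields
\begin{equation*}
 \|g\|_{L^2(y + A_a)}^2 \leq (K/\gamma)^{K n}\,\|g\|_{L^2((y+A_a) \cap S)}^2.
\end{equation*}
Summing over all tiles and invoking translation-uniformity of $(\gamma,a)$-thickness produces the global inequality; a Chebyshev-type argument absorbs the "bad" tiles and replaces the base $K$ by $K^d$ in the dimension $d$.

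The hard part, as in every Kovrijkine-style proof, is the simultaneous selection of the good tiles together with the multidimensional Remez step: one must split the spectral mass of $g$ across the tiling so that on each good tile $g$ may be replaced by a genuine polynomial at controlled cost, and then invoke a Remez inequality with sharp dependence on dimension. This is carried out in \cite{Kovrijkine-00} on $\RR^d$ and transferred to the torus in \cite{EgidiV-20}. My contribution would therefore mostly be bookkeeping: tracking how $d$, $|a|_1$, and $\sqrt{E}$ enter the exponent to arrive at the clean constant $(K^d/\gamma)^{K\sqrt{E}(|a|_1 + d)}$ with an absolute $K$, and verifying that the reflection trick in the Dirichlet/Neumann cases does not alter this form.
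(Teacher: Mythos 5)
The paper does not actually prove Theorem~\ref{thm:uncertainty}: it states only that the result ``is implied by \cite{EgidiV-20} and has been spelled out in Section 5 of \cite{EgidiV-18}''. There is therefore no paper-internal proof to compare your sketch against, and your proposal is at about the same level of completeness as the paper's own treatment, since both ultimately delegate the Kovrijkine-style Bernstein/Remez machinery to the cited references. Your high-level account of that machinery is consistent with what those references do: identify the spectral subspace with a space of trigonometric polynomials band-limited to a ball of radius $\sqrt{E}$, tile by translates of $A_a$, select ``good'' cells via Bernstein bounds, apply a Remez--Tur\'an inequality there with effective degree of order $\sqrt{E}\,(|a|_1+d)$, and sum. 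Your reflection trick for Dirichlet/Neumann is also a legitimate way to reduce to the periodic torus result of \cite{EgidiV-20}, though worth noting: the reflected set is not $(\gamma,a)$-thick but rather, roughly, $(\gamma/2^d,2a)$-thick (a window straddling a face only sees a \emph{fraction} of a full $A_a$-window of $S$ inside any one reflected copy, so you must enlarge the window to guarantee one full copy), and the tiling by translates of $A_a$ is generally only a covering since $L/a_i$ need not be integral; both points are absorbable into the absolute constant $K$, but they are exactly the kind of bookkeeping you flag as remaining, so I would not count them as gaps in intent, only as places where the sketch is not yet a proof.
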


Theorem~\ref{thm:uncertainty} is a finite volume variant of the Logvinenko-Sereda theorem~\cite{LogvinenkoS-74,Kacnelson-73} in a version of Kovrijkine~\cite{Kovrijkine-00, Kovrijkine-01}.
It can be interpreted as a quadratic form inequality
\begin{equation}
 \label{eq:quadratic_form_Log-Sereda}
 \chi_{(-\infty, E]}(H_{0, L, x})
 \leq
 \left(\frac{K^d}{\gamma}\right)^{ K\sqrt{E}(|a|_{1}+d)}
 \mkern-18mu
 \chi_{(-\infty, E]}(H_{0, L, x})
 \mathbf{1}_S
 \chi_{(-\infty, E]}(H_{0, L, x}).
\end{equation}
Clearly, on can replace all spectral projectors in~\eqref{eq:quadratic_form_Log-Sereda} by $\chi_{J}(H_{0, L, x})$ for any $J \subset (- \infty, E]$, since one can always multiply the inequality with these projectors from both sides.

\begin{proof}[Proof of Theorem \ref{thm:optWE}]
The statement is proved using the strategy of \cite{CombesHK-07}: We split the trace in~\eqref{eq:WE2} in two contributions according to spectral projectors of the unperturbed operator $H_{0, L, x}$, that is the pure negative Laplacian on $L^2(\Lambda_L(x))$.
For that purpose let $I:=[E-\varepsilon,E+\varepsilon]$ and let
$J \supset I$ be another interval with $\dist(I, J^c)>0$.
Then
\begin{equation}
\label{eq:decomp}
\Tr [ \chi_{I}(H_{\omega, L, x}) ]
=
\Tr [ \chi_{I}(H_{\omega, L, x}) \chi_{J}(H_{0, L, x}) ]+ \Tr [ \chi_{I}(H_{\omega, L, x}) \chi_{J^c}(H_{0, L, x}) ].
\end{equation}
The strategy of~\cite{CombesHK-07} relies on estimating the expectation of the two terms on the right hand side of~\eqref{eq:decomp} separately:
 The second term on the right hand side of~\eqref{eq:decomp} can be estimated as in (2.6) to (2.20) of~\cite{CombesHK-07} by using the fact that $\supp {u_j} \subset B_R(j)$ and Combes-Thomas bounds.
 This leads to
 \[
  \EE
    \left\{
      \Tr [ \chi_{I}(H_{\omega, L, x}) \chi_{J^c}(H_{0, L, x}) ]
    \right\}
  \leq
  C_1
  \EE\{\Tr [ \chi_{I}(H_{\omega, L,x}) ]\}
  +
  C_2
  s(\varepsilon) L^d,
 \]
 where $C_1,C_2 > 0$ are constants and $C_1 < 1$, hence the corresponding term can be absorbed on the left hand side of \eqref{eq:decomp}.

 The expectation of the first term on the right hand side of~\eqref{eq:decomp} is estimated in (2.21) to (2.32) of~\cite{CombesHK-07} by using a scale-free quantitative unique continuation principle for spectral projectors.
 In their situation this relies on the fact that their sum $\sum_j u_j$ is periodic and uniformly positive on an open set of positive measure, cf.~\cite[Theorem 2.1]{CombesHK-07}.
 In our case, these assumptions of periodicity and positivity on an open set no longer hold, but  Theorem~\ref{thm:uncertainty} yields an appropriate replacement and leads to the quadratic form inequality
 \begin{align*}\label{eq:quadratic-forms}
\chi_{J}(H_{0, L, x})
&\leq
C(J, S)
\cdot
\chi_{J}(H_{0, L, x})
\mathbf{1}_S
\chi_{J}(H_{0, L, x})
\\
&\leq
C(J,S)
\sum_{j \in \ZZ^d}
\chi_{J}(H_{0, L, x})
u_j
\chi_{J}(H_{0, L, x})
\end{align*}
where it is implicitly understood that $\sum_{j \in \ZZ^d} u_j$ is restricted to $\Lambda_L(x)$.
 This replaces the inequality of \cite[Theorem 2.1]{CombesHK-07}.
The rest of the proof follows along the lines of \cite[Section 2]{CombesHK-07}.
\end{proof}

\subsection{Spectral minimum}
\label{subsec:spectral_minimum}

Let us prove Identity~\eqref{eq_minimum-sigma}.
Assume $\min \supp \mu_j =0 $ for all $j \in \ZZ^d$ and let $\varepsilon\in(0,1)$. There exist $r< \infty$ as well as an $L^2$-normalized $\varphi \in C_c^\infty(\RR^d)$
with support contained in $B_r(0)$ and such that
\[
 \langle \varphi, -\Delta \varphi\rangle =\Vert\nabla \varphi\Vert^2 < \varepsilon/3.
\]
Set
\[
\Omega_\varepsilon:= \pi_Q^{-1} \left( \bigtimes_{j \in Q} \left[ 0,\frac{\varepsilon}{3 C_V} \right] \right)
\]
where $Q =\ZZ^d \cap B_{2r +2R}(0)$ and $\pi_Q \colon \Omega \to \bigtimes_{j \in Q} [\ml, \mup]$ is the canonical projection.
Then by assumption $\PP(\Omega_\varepsilon) > 0$ and we have for all $\omega \in \Omega_\varepsilon$
\begin{equation*}
 \langle \varphi, V_\omega \varphi\rangle
 \leq  \frac{\varepsilon}{3 C_V} \left (\Vert\nabla \varphi\Vert^2/2 + C_V\Vert\varphi\Vert^2 \right)
 \leq
 \frac{\varepsilon}{3 C_V} \left ( \frac{\varepsilon}{6}  + C_V \right)
 <
 \frac{\varepsilon}{2}
 \end{equation*}
 where we assumed without loss of generality $C_V \geq 1$.
Thus $\langle \varphi, H_\omega \varphi\rangle < \varepsilon$ and by the variational characterization $\min\sigma(H_\omega)< \varepsilon $ for all $\omega \in\Omega_\varepsilon$.
Since this holds for all $\varepsilon \in (0,1)$, we obtain \eqref{eq_minimum-sigma}.

\subsection{Initial length scale estimate}
We provide the proof of the initial length scale estimate for our non-stationary model, Theorem~\ref{thm:ISE}.
For this purpose, we show how our random potential can be lower bounded by a simpler one to which the results of \cite{SchumacherV}
apply directly.

First we modify the random variables.
Since  $\min \supp \mu_j =0 $ for all $j \in \ZZ^d$ and $\lim_{\varepsilon\to0}s(\varepsilon)=0$
there exist $\varepsilon_1>0$ such that $0< s(\varepsilon_1)<1$.
For all $j \in \ZZ^d$ define $\eta_j \colon \Omega \to \RR$ by
\[
 \eta_j = \varepsilon_1 \mathbf{1}_{\{\pi_j\in [\varepsilon_1, \mup]\}},
\]
in particular $0\leq \eta_j \leq \pi_j$.
Let $A_a$ be the window in the definition of the thick set $S$.
Choose $L\in \NN$ with $L\geq \max\{a_1, \ldots, a_d\}$.
Then
\[
|S\cap \Lambda_L(k)| \geq \gamma a_1 \cdots a_d  =\frac{\gamma a_1 \cdots a_d}{L^d} L^d=: \tg L^d ,
\]
hence $S$ is $(\tg, (L,\ldots, L))$-thick.
For $k\in \ZZ^d$ set $\Lambda^+(k)=\Lambda_{L+2R}(k)\cap \ZZ^d$, $N:=\sharp \Lambda^+(k)$, and for $j \in \Lambda^+(k)$ set
\[
S_j:=\{y\in \Lambda_L(k)\mid u_j(y) \geq 1/N\}.
\]
Note that by translation invariance $N$ is $k$-independent.
Then
\[
\bigcup_{j\in \Lambda^+(k)}S_j \supset S \cap \Lambda_L(k).
\]
Subadditivity gives
\[
\tg L^d \leq \lvert \bigcup_{j\in \Lambda^+(k)}S_j \rvert \leq
\sum_{j\in \Lambda^+(k)} |S_j|,
\]
hence there exist a $j_k\in \Lambda^+(k)$ such that $|S_{j_k}| \geq \tg L^d/ N$.
It is possible to find a subset $T_k\subset S_{j_k}\subset\Lambda_L(k)$ such that $|T_{k}| =\hg :=\tg L^d/ N$.

Thus we have identified  for each $k \in \RR^d$ a single site potential $u_{j_k}$ with the following properties:
\begin{align}
u_{j_k}\geq \frac{1}{N} \mathbf{1}_{T_{k}},  \quad  |T_{k}|=\hg .
\end{align}

We choose the sublattice $\Gamma=\big((L+2R)\ZZ\big)^d$
with periodicity cell $\Lambda_{L+2R}$. The  lattice $\Gamma$ is sufficiently sparse, so that the map $\Gamma\ni k\mapsto j_k$ is injective, hence the random variables $\pi_{j_k} , k\in \Gamma$ are independent.
Now compare the original operator with a diluted Anderson model
\[
V_\omega \geq \sum_{k\in \Gamma} \pi_{j_k}(\omega) u_{j_k}
\geq
\sum_{k\in \Gamma} \eta_{j_k}(\omega) \frac{1}{N} \mathbf{1}_{T_k}=: W_\omega .
\]

The initial scale estimate, Theorem~\ref{thm:ISE} will follow from an application of Theorem~3.3 and its Corollary~4.2 in~\cite{SchumacherV}.
For this purpose we need to verify the non-degeneracy condition in Definition 3.1 of~\cite{SchumacherV}.

Since  $W_\omega \leq V_\omega$, it is sufficient to establish this condition for $W_\omega$.
For this purpose let us  define
\begin{alignat*}{2}
\eta_{j_k}&\colon \Omega_{W}:= \RR^{\Gamma} \to \RR,
\quad
&&\eta_{j_k}(\omega)=\eta_{j_k}(\pi(\omega)_{j_k}),
\\
\lambda_k &\colon \Omega_{W}\to \Omega_0 =\Gamma\times \Omega_{W},
\quad
&&\lambda_k(\omega)= (k,\omega),
\\
u &\colon\Omega_0 \times \RR^d \to \RR,
\quad
&&u\big((k,\omega),x) =\frac{1}{N}  \eta_{j_k}(\omega ) \mathbf{1}_{T_k}(x+k) .
 \end{alignat*}
If we choose  $m :=\min\{\varepsilon_1/N,\hg, 1-s(\varepsilon_1) \}>0$ it follows directly that $W_\omega$ is
${m}$-non-degenerate, more precisely
      \begin{equation*}
        \inf_{k\in\ZZ^d}\PP \bigl\{
          \vol{\{x\in\Lambda_{L+2R} \mid u_{\lambda_k}(x)\ge{m}\}}\ge{m} \bigr\} \ge{m}
      \end{equation*}
which is the condition formulated in Definition 3.1 of~\cite{SchumacherV}.
Now Corollary 4.2 of \cite{SchumacherV} immediately yields Theorem \ref{thm:ISE}.

\subsection{Proof of Theorems~\ref{thm:insensitive} and~\ref{thm:insensitive_2}}

	\label{subsec:insensitive}

In this section we prove Theorems~\ref{thm:insensitive} and~\ref{thm:insensitive_2}, stating that \hypNoPi, the contraposition of Assumption \hypPi, excludes a Wegner estimate.

\begin{proof}[Proof of Theorem~\ref{thm:insensitive}]
	First, observe that \hypNoPi implies that for every choice $L, \delta, \kappa > 0$ there are infinitely many mutually disjoint cubes $\Lambda_L(x_j) \subset \RR^d$, $j \in \NN$, such that
	\[
	\vol ( S_\kappa \cap \Lambda_L(x_j)) < \delta
	\quad
	\text{for all $j \in \NN$}.
	\]
	Indeed, if there were only finitely many such cubes, then they would all be contained in a larger cube $\Lambda_{\tilde L}(0)$.
	But then, for every $x \in \RR^d$, the cube $\Lambda_{\tilde L + 2 L}(x)$ would have to contain at least one cube $\Lambda_L(y) \subset \Lambda_{\tilde L + 2 L}(x)$ with $\vol (S_\kappa \cap \Lambda_L(y)) \geq \delta$.
	Hence, $\vol (S_\kappa \cap \Lambda_{\tilde L + 2 L}(x) ) \geq \delta$ for all $x \in \RR^d$ and $S_\kappa$ would be thick, a contradiction.
	
	Now, let $E \geq 0$. The maximal gap between two successive eigenvalues of $H_{0,L,0}$ below $E + 1$ tends to zero as $L \to \infty$.
    More precisely, the Dirichlet eigenvalues on the cube $\Lambda_L$ have the representation
 \begin{equation*}
    E_n(L)=\frac{\pi^2}{L^2}\sum_{j=1}^{d} n_j^2, \quad n \in \NN^d
 \end{equation*}
from which it is easy to see that
 \begin{equation*}
   \operatorname{dist} (E, \sigma(  H_{0, L, x_j})) =\operatorname{dist} (E, \sigma(  H_{0, L, 0})) \leq 6\pi \sqrt{E+1}/L=:\frac{\varepsilon}{2}
 \end{equation*}
	Take a normalized eigenfunction $\varphi$ of $H_{0,L,x_j}$ to an eigenvalue in an $\frac{\varepsilon}{2}$-neighborhood of $E$.
	Then
	\[
	\lVert
		\left(
			H_{0,L,x_j} - E
		\right)
		\varphi
	\rVert_{L^2(\Lambda_L(x_j))}
	\leq
	\frac{\varepsilon}{2}.
	\]
	Since $\lvert V_\omega \rvert \leq \max \{ \lvert \ml \rvert, \lvert \mup \rvert \} U(x)$, we have for almost every $\omega \in \Omega$
	\[
	\lVert V_\omega \varphi \rVert_{L^2(\Lambda_L(x_j))}
	\leq
	\max \{ \lvert \ml \rvert, \lvert \mup \rvert \}
	\lVert
	U
	\varphi
	\rVert_{L^2(\Lambda_L(x_j))},
	\]
	and estimate
	\begin{align*}
	\lVert
	U
	\varphi
	\rVert_{L^2(\Lambda_L(x_j))}
	&\leq
	\lVert U \rVert_{L^4(\Lambda_L(x_j))}
	\lVert \varphi \rVert_{L^4(\Lambda_L(x_j))}
	\\
	&=
	\lVert U \rVert_{L^4(\Lambda_L(x_j))}
	\lVert \euler^{- H_{0, L, x_j}} \euler^{H_{0, L, x_j}} \varphi \rVert_{L^4(\Lambda_L(x_j))}.
	\end{align*}
The exponential  $\euler^{- t H_{0,L,x_j}}$ of the Dirichlet Laplacian (heat semigroup at time one)
is a convolution operator with corresponding integral kernel bounded by the free kernel
	\[
	p_1(x-y)
	=
	\frac{1}{(4 \pi)^{d/2}}\exp
	\left(
	\frac{\lvert x - y \rvert}{4}
	\right)
	\]
	see, for instance the proof of Lemma 2.2 in~\cite{TaeuferV-15}.
	In particular, the function $\RR^d \ni z \mapsto p_1(z)$ is in every $L^p$-space for $p \in [1,\infty)$.
	By Young's convolution inequality, we can therefore estimate for every $g \in L^2(\Lambda_L(x_j))$
	\begin{align*}
	\lVert \euler^{- H_{0, L, x_j}} g \rVert_{L^4(\Lambda_L(x_j))}
	\leq
	\lVert
	p_1 \ast j_{\Lambda_L(x_j)} g
	\rVert_{L^4(\RR^d)}
	&\leq
	\lVert
	p_1
	\rVert_{L^{4/3}(\RR^d)}
	\lVert
	g
	\rVert_{L^2(\Lambda_L(x_j))}
	\\
	&=
	C_d
	\lVert
	g
	\rVert_{L^2(\Lambda_L(x_j)}
	\end{align*}
	for an $L$-independent $C_d > 0$, where $j_{\Lambda_L(x_j)} \colon L^2(\Lambda_L(x_j)) \to L^2(\RR^d)$ is the canonical embedding.
	Using $\lVert \euler^{H_{0, L, x_j}} \varphi \rVert_{L^2(\Lambda_L(x_j))} \leq \euler^{E + \frac{\varepsilon}{2}}$, we conclude
	\[
	\lVert V_\omega \varphi \rVert_{L^2(\Lambda_L(x_j))}
	\leq
	C_d
	\max \{ \lvert \ml \rvert, \lvert \mup \rvert \}
	\euler^{E + \frac{\varepsilon}{2}}
	\lVert U \rVert_{L^4(\Lambda_L(x_j))}
    \leq  C \lVert U \rVert_{L^4(\Lambda_L(x_j))}
	\]
	for some  $C=C(d,m_+,m_-,E)$ if $L\geq 1$.
    We split $U = U \mathbf{1}_{S_\kappa} + U \mathbf{1}_{S_\kappa^c}$, and find
	\[
	\lVert U \rVert_{L^4(\Lambda_L(x_j)}
	\leq
	C_U
	\lVert
		\mathbf{1}_{S_\kappa}
	\rVert_{L^4(\Lambda_L(x_j))}
	+
	\kappa
	L^{d/4}
	\leq
	C_U
	\delta^{1/4}
	+
	\kappa
	L^{d/4}.	
	\]
	Choosing
\begin{equation*}
  \delta\leq \left(\frac{6\pi \sqrt{E+1}}{2C_UCL} \right)^4
  \quad \text{ and } \quad
  \kappa \leq \frac{6\pi \sqrt{E+1}}{2C L^{1+d/2}}
\end{equation*}
this sum can be made smaller than $\frac{\varepsilon}{2C }=3\pi \sqrt{E+1}/(C \, L)$ whence	
	\[
	\lVert \left( H_{\omega, L, x_j} - E \right) \varphi \rVert_{L^2(\Lambda_L(x_j))}
	\leq
	\varepsilon =12\pi \sqrt{E+1}/L.
	\]
	This implies $\lVert \left( H_{\omega, L, x_j} - E \right)^{-1} \varphi \rVert_{L^2(\Lambda_L(x_j))} \geq \varepsilon^{-1}$ which in turn shows
	\[
	\operatorname{dist} (E, \sigma (H_{\omega, L, x_j})) \leq \varepsilon
	\quad
	\text{for  all configurations $\omega \in \Omega$}.
	\qedhere
	\]
\end{proof}

The proof of Theorem~\ref{thm:insensitive_2} proceeds along the lines of the proof of
Theorem \ref{thm:insensitive} with the difference that one chooses
\begin{equation*}
  \delta\leq \left(\frac{6\pi \sqrt{E+1}}{2C_UC e^L} \right)^4
  \quad \text{ and } \quad
  \kappa \leq \frac{6\pi \sqrt{E+1}}{2C L^{d/2}e^L}.
\end{equation*}


\subsection*{Acknowledgment}
Stimulating and helpful discussions with Christoph Schumacher are gratefully acknowledged.

\subsection*{Funding}
Parts of this work were completed while M.T. was working at Queen Mary University of London
and supported by the European Research Council starting grant 639305 (SPECTRUM).
Parts of this work is based on the project
\emph{Multiscale version of the Logvinenko-Sereda Theorem} which was funded under the DFG grant VE 253/7-1.

%


\begin{thebibliography}{9}

\bibitem[CH94]{CombesH-94b}
J.-M. Combes and P.D. Hislop.
  Localization for some continuous, random {Hamiltionians} in
  $d$-dimensions.
  {\em J. Funct. Anal.}, 124:149--180, 1994.

\bibitem[CHK03]{CombesHK-03}
J.-M. Combes, P.~D. Hislop, and F.~Klopp.
  H\"older continuity of the integrated density of states for some
  random operators at all energies.
  {\em Int. Math. Res. Not.}, 4:179--209, 2003.

\bibitem[CHK07]{CombesHK-07}
J.-M. Combes, P.~D. Hislop, and F.~Klopp.
  An optimal {Wegner} estimate and its application to the global
  continuity of the integrated density of states for random {Schr\"odinger}
  operators.
  {\em Duke Math. J.}, 140(3):469--498, 2007.

\bibitem[CL90]{CarmonaL-90}
R.~Carmona and J.~Lacroix.
  {\em Spectral Theory of Random {Schr\"odinger} Operators}.
  Birkh\"auser, Boston, 1990.

\bibitem[EV18]{EgidiV-18}
M.~Egidi and I.~Veseli\'c.
  Sharp geometric condition for null-controllability of the heat
  equation on {$\mathbb{R}^d$} and consistent estimates on the control cost.
  {\em Arch. Math. (Basel)}, 111(1):85--99, 2018.

\bibitem[EV20]{EgidiV-20}
M.~Egidi and I.~Veseli{\'{c}}.
  Scale-free unique continuation estimates and
  logvinenko{\textendash}sereda theorems on the torus.
  {\em Ann. Henri Poincar{\'{e}}}, 21(12):3757--3790, 2020.

\bibitem[FGKM07]{FigotinGKM-07}
A.~{Figotin}, F.~{Germinet}, A.~{Klein}, and P.~{M\"uller}.
  {Persistence of Anderson localisation in Schr\"odinger operators with
  decaying random potentials}.
  {\em {Ark. Mat.}}, 45(1):15--30, 2007.

\bibitem[GK01]{GerminetK-01}
F.~Germinet and A.~Klein.
  Bootstrap multiscale analysis and localisation in random media.
  {\em Comm. Math. Phys.}, 222(2):415--448, 2001.

\bibitem[GK03]{GerminetK-03}
F.~Germinet and A.~Klein.
  Explicit finite volume criteria for localisation in continuous random
  media and applications.
  {\em Geom. Funct. Anal.}, 13(6):1201--1238, 2003.

\bibitem[GK04]{GerminetK-04}
F.~Germinet and A.~Klein.
  A characterization of the {A}nderson metal-insulator transport
  transition.
  {\em Duke Math. J.}, 124(2):309--350, 2004.

\bibitem[HK00]{HundertmarkK-00}
D.~Hundertmark and W.~Kirsch.
  Spectral theory of sparse potentials.
  In {\em Stochastic processes, physics and geometry: new interplays, I
  (Leipzig, 1999)}, pages 213--238. Amer. Math. Soc., Providence, RI, 2000.

\bibitem[Kac73]{Kacnelson-73}
V.~\`E. Kacnel'son.
  Equivalent norms in spaces of entire functions.
  {\em Mat. Sb. (N.S.)}, 92(134):34--54, 165, 1973.

\bibitem[Kir96]{Kirsch-96}
W.~Kirsch.
  {Wegner} estimates and {Anderson} localisation for alloy-type
  potentials.
  {\em Math. Z.}, 221:507--512, 1996.

\bibitem[Kir02]{Kirsch-02}
W.~Kirsch.
  Scattering theory for sparse random potentials.
  {\em Random Oper. Stochastic Equations}, 10(4):329--334, 2002.

\bibitem[Kle13]{Klein-13}
A.~Klein.
  Unique continuation principle for spectral projections of
  {Schr{\"o}dinger} operators and optimal {Wegner} estimates for non-ergodic
  random {Schr{\"o}dinger} operators.
  {\em Comm. Math. Phys.}, 323(3):1229--1246, 2013.

\bibitem[Kov00]{Kovrijkine-00}
O.~Kovrijkine.
  {\em Some estimates of {F}ourier transforms}.
  ProQuest LLC, Ann Arbor, MI, 2000.
  Thesis (Ph.D.)--California Institute of Technology.

\bibitem[Kov01]{Kovrijkine-01}
O.~Kovrijkine.
  Some results related to the {L}ogvinenko-{S}ereda theorem.
  {\em Proc. Amer. Math. Soc.}, 129(10):3037--3047, 2001.

\bibitem[{Kri}93]{Krishna-93}
M.~{Krishna}.
  {Absolutely continuous spectrum for sparse potentials}.
  {\em {Proc. Indian Acad. Sci., Math. Sci.}}, 103(3):333--339, 1993.

\bibitem[{Kri}02]{Krishna-02}
M.~{Krishna}.
  {Smoothness of density of states for random decaying interaction}.
  {\em {Proc. Indian Acad. Sci., Math. Sci.}}, 112(1):163--181, 2002.

\bibitem[KS01]{KostrykinS-01b}
V.~Kostrykin and R.~Schrader.
  Regularity of the surface density of states.
  {\em J. Funct. Anal.}, 187(1):227--246, 2001.

\bibitem[KV02]{KirschV-02c}
W.~Kirsch and I.~Veseli{\'c}.
  Wegner estimate for sparse and other generalized alloy type
  potentials.
  {\em Proceedings of the Indian Academy of Sciences - Mathematical
  Sciences}, 112(1):131--146, 2002.

\bibitem[LS74]{LogvinenkoS-74}
V.~N. Logvinenko and Ju.~F. Sereda.
  Equivalent norms in spaces of entire functions of exponential type.
  {\em Teor. Funkci\u\i \ Funkcional. Anal. i Prilo\v zen.}, (Vyp.
  20):102--111, 175, 1974.

\bibitem[MH84]{MartinelliH-84}
F.~Martinelli and H.~Holden.
  On absence of diffusion near the bottom of the spectrum for a random
  {Schr\"odinger} operator on {$ L^{2}(R^{\nu}) $}.
  {\em Commun. Math. Phys.}, 93:197--217, 1984.

\bibitem[NTTV18]{NakicTTV-18}
I.~Naki\'c, M.~T\"aufer, M.~Tautenhahn, and I.~Veseli\'c.
  Scale-free unique continuation principle for spectral projectors,
  eigenvalue-lifting and {W}egner estimates for random {S}chr\"odinger
  operators.
  {\em Anal. PDE}, 11(4):1049--1081, 2018.

\bibitem[PF92]{PasturF-92}
L.~A. Pastur and A.~L. Figotin.
  {\em Spectra of Random and Almost-Periodic Operators}.
  Springer Verlag, Berlin, 1992.

\bibitem[RM12]{RojasMolina-12}
C.~Rojas-Molina.
  Characterization of the {Anderson} metal-insulator transition for non
  ergodic operators and application.
  {\em Ann. Henri Poincar\'e}, 13(7):1575--1611, 2012.

\bibitem[RMV13]{RojasMolinaV-13}
C.~Rojas-Molina and I.~Veseli{\'c}.
  Scale-free unique continuation estimates and applications to random
  {S}chr\"odinger operators.
  {\em Comm. Math. Phys.}, 320(1):245--274, 2013.

\bibitem[RS78]{ReedS-78}
M.~Reed and B.~Simon.
  {\em Methods of Modern Mathematical Physics {IV}, Analysis of
  Operators}.
  Academic Press, San Diego, 1978.

\bibitem[Sto01]{Stollmann-01}
P.~Stollmann.
  {\em Caught by disorder: Bound States in Random Media}, volume~20 of
  {\em Progress in Mathematical Physics}.
  Birkh\"auser, 2001.

\bibitem[SV]{SchumacherV}
C. Schumacher and I.~Veseli\'c.
  Lifshitz asymptotics for Hamiltonians nonnegative in the randomness.
  ArXiv preprint arxiv.org/2103.09010, 2021.

\bibitem[ST20]{SeelmannT-20}
A.~Seelmann and M. T\"aufer.
  {\em Ann. Henri Poincar\'e}, 21(7):2151--2166,2020.

\bibitem[TT17]{TaeuferT-17}
M.~T\"aufer and M. Tautenhahn.
  Wegner Estimate and Disorder Dependence for Alloy-Type Hamiltonians with Bounded Magnetic Potential.
  {\em Ann. Henri Poincar\'e}, 19(4):1151--1165, 2018.

\bibitem[TV15]{TaeuferV-15}
M.~T\"aufer and I.~Veseli\'c.
  Conditional Wegner Estimate for the Standard Random Breather Potential.
  {\em J. Stat. Phys.} 161:902–914, 2015.

\bibitem[Ves08]{Veselic-08}
I.~Veseli{\'c}.
  {\em Existence and Regularity Properties of the Integrated Density of
  States of Random {S}chr\"odinger Operators}, volume 1917 of {\em Lecture
  Notes in Mathematics}.
  Springer, Berlin, 2008.

\end{thebibliography}

\end{document}